\newcommand*\mR{\mathbb{R}}
\newcommand*\ssp{\mathcal{X}^{s,p}(D)}
\newcommand*\wsp{W^{s,p}(D)}
\newcommand*\wosp{W_0^{s,p}(D)}
\newcommand*\sosp{\mathcal{X}_0^{s,p}(D)}
\newcommand*\wtu{\widetilde{u}}
\theoremstyle{plain}
\newtheorem{proposition}{Proposition}[section]
\newtheorem{lemma}[proposition]{Lemma}
\newtheorem{theorem}[proposition]{Theorem}
\theoremstyle{definition}
\newtheorem{definition}[proposition]{Definition}
\newtheorem{remark}[proposition]{Remark}
\numberwithin{equation}{section}
\title[Fractional Korn's inequality]{Fractional Korn's inequality on subsets of the Euclidean space}
\author{Artur Rutkowski}
\address{Faculty of Pure and Applied Mathematics,
	Wroc\l aw University of Science and Technology,
	Wyb. Wyspia\'nskiego 27, 50-370 Wroc\l aw, Poland}
\email{artur.rutkowski@pwr.edu.pl}
\thanks{\hspace{-11pt}2020 \textit{Mathematics Subject Classification}. 26D10, 46E35, 46E40.\\ \textit{Key words and phrases}. Korn's inequality, fractional Sobolev spaces.}
\begin{document}
\maketitle
\begin{abstract}
	We give a new, simpler proof of the fractional Korn's inequality for subsets of $\mR^d$. We also show a framework for obtaining Korn's inequality directly from the appropriate Hardy-type inequality.
\end{abstract}
\section{Introduction}
Let $D$ be a bounded open subset of $\mR^d$, $d>1$, and let $p\in (1,\infty)$. For $x\in \mR^d$ we will use the notation $x = (x',x_d)$ with $x'\in \mR^{d-1}$, $x_d\in \mR$. Whenever we mention a vector field on $D\subseteq \mR^d$, we mean a measurable mapping from $D$ into $\mR^d$. The space $L^p(D)$ consists of all the vector fields $u$ for which the norm
$\|u\|_{L^p(D)} := (\int_D |u(x)|^p \, dx)^{1/p}$ is finite.

We define the fractional Sobolev space of the vector fields as follows:
$$\wsp = \bigg\{u\in L^p(D): |u|_{\wsp}^p  := \int_D\int_D\frac{\big|u(x) - u(y)\big|^p}{|x-y|^{d+sp}} \, dx \, dy < \infty\bigg\}.$$
$\wsp$ is endowed with the norm given by the formula $\|u\|_{\wsp} := (\|u\|_{L^p(D)}^p + |u|_{\wsp}^p)^{1/p}.$ We also introduce the Sobolev space with the projected difference quotient:
$$\ssp = \bigg\{u\in L^p(D): |u|^p_{\ssp} := \int_D\int_D \frac{\big|(u(x) - u(y))\frac{(x-y)}{|x-y|}\big|^p}{|x-y|^{d+sp}} \, dx\, dy < \infty \bigg\}.$$
We equip $\ssp$ with the norm $\|u\|_{\ssp} := (\|u\|_{L^p(D)}^p + |u|^p_{\ssp})^{1/p}$. Furthermore, we define the spaces $\wosp$ and $\sosp$ as the closures of $(C^1_c(D))^d$ in the norms $\|\cdot\|_{\wsp}$ and $\|\cdot\|_{\ssp}$, respectively.

Obviously, $\|u\|_{\ssp} \leq \|u\|_{\wsp}$. Our main goal here is to establish a reverse inequality with a multiplicative constant on the left-hand side, which is known as the fractional Korn's inequality.
\begin{theorem}\label{th:main}
	Assume that $p>1$, $s\in (0,1)$, and $sp\neq 1$. If $D$ is a bounded $C^1$ open set or a bounded Lipschitz set with sufficiently small Lipschitz constant, then there exists $C\geq 1$ such that for every $u\in \sosp$ we have
	$$C\|u\|_{\ssp} \geq \|u\|_{\wsp}.$$
	In particular, $\sosp = \wosp$.
\end{theorem}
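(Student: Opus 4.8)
\bigskip
\noindent\textbf{Proof proposal.} The plan is to build the estimate from three pieces: the fractional Korn inequality on the whole space, a fractional Hardy inequality attached to the projected seminorm, and a localization argument that is careful about the anisotropy of $\ssp$. By the definition of $\sosp$ it suffices to prove the inequality for $u\in(C^1_c(D))^d$ and then pass to the limit, which at the same time yields $\sosp=\wosp$, since the two norms will have been shown to be equivalent on the common dense class $(C^1_c(D))^d$. The first ingredient is the whole-space bound
\[
\|v\|_{W^{s,p}(\mR^d)}\le C_0\,\|v\|_{\mathcal X^{s,p}(\mR^d)},\qquad v\in(C^1_c(\mR^d))^d,
\]
with $C_0=C_0(d,s,p)$. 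For $p=2$ this follows from an elementary Fourier computation, the key point being a uniform lower bound of the form $c|\xi|^{2s}$ on the (matrix-valued, anisotropic) symbol of the projected seminorm, a positivity that persists in the directions orthogonal to $\xi$. For general $p$ it is a known fact that I would cite, or reprove through $L^p$-bounds for the associated singular integral operators.

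The second ingredient, which is the ``framework'' advertised in the abstract, is the Hardy-type inequality
\[
\int_D\frac{|u(x)|^p}{\operatorname{dist}(x,\partial D)^{sp}}\,dx\le C\,\|u\|_{\ssp}^p,\qquad u\in(C^1_c(D))^d,
\]
where the hypothesis $sp\neq1$ is used --- and, as far as Korn's inequality is concerned, used only here. Granting it, the half-space $H=\{x\in\mR^d:x_d>0\}$ is handled at once: for $u\in(C^1_c(H))^d$ the zero-extension $\bar u$ lies in $(C^1_c(\mR^d))^d$, and elementary computations give, with a constant $c=c(d,s,p)$,
\[
\|\bar u\|^p_{W^{s,p}(\mR^d)}=\|u\|^p_{W^{s,p}(H)}+c\int_H\frac{|u(x)|^p}{x_d^{sp}}\,dx,\qquad \|\bar u\|^p_{\mathcal X^{s,p}(\mR^d)}\le\|u\|^p_{\mathcal X^{s,p}(H)}+c\int_H\frac{|u(x)|^p}{x_d^{sp}}\,dx,
\]
so that applying the whole-space inequality to $\bar u$ and then the Hardy inequality on $H$ bounds $\|u\|_{W^{s,p}(H)}$ by $C\|u\|_{\mathcal X^{s,p}(H)}$ with nothing to absorb --- which is exactly why the Hardy inequality must carry the \emph{weaker} projected seminorm on the right and not $\|\cdot\|_{W^{s,p}}$. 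To prove that Hardy inequality I would slice along directions: for a unit vector $\omega$ with $\omega_d$ bounded away from $0$, apply the one-dimensional fractional Hardy inequality --- valid precisely for $sp\neq1$, with constant depending only on $s,p$ --- to the scalar function $t\mapsto u(x+t\omega)\cdot\omega$ along each line parallel to $\omega$ meeting $\partial H$, and integrate over such lines and over $\omega$ in a fixed spherical cap. The substitution $(\omega,t)\mapsto t\omega$ turns the right-hand side into $\|u\|^p_{\mathcal X^{s,p}(H)}$, the factor $\omega_d^{sp}$ coming from the one-dimensional inequality stays bounded on the cap, and $\int_{\mathrm{cap}}|\nu\cdot\omega|^p\,d\omega\ge c>0$ for every unit $\nu$ recovers $\int_H|u(x)|^px_d^{-sp}\,dx$.

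For a bounded $C^1$ (or small-Lipschitz-constant) set $D$ I would then localize. Fix $\eps_0=\eps_0(d,s,p)>0$, small, chosen below, and cover $\overline D$ by finitely many open sets $U_i$ with either $U_i\subset\subset D$, or $\partial D\cap U_i$ a $C^1$ graph of gradient at most $\eps_0$ in suitable orthonormal coordinates: such a cover exists by compactness when $D$ is $C^1$, and exists for a Lipschitz $D$ provided its Lipschitz constant is at most $\eps_0$ --- which is where that hypothesis enters. With a subordinate partition of unity $\{\phi_i\}$, $\sum_i\phi_i=1$ on $\overline D$, $\operatorname{supp}\phi_i\subset\subset U_i$, write $u=\sum_i\phi_iu$. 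An interior piece $\phi_iu$ extends by zero to $(C^1_c(\mR^d))^d$, where the whole-space inequality applies. A boundary piece is pushed to $H$ by a map $\Psi_i$ that straightens $\partial D\cap U_i$ \emph{and} rotates the values of the field by the associated rotation; then $\Psi_i$ differs from a rigid motion by $O(\eps_0)$ in $C^1$, so the $W^{s,p}$-norm is distorted only by $1+O(\eps_0)$ and --- since the direction $(x-y)/|x-y|$ is rotated up to an $O(\eps_0)$ error --- the pushed field $w_i\in(C^1_c(H))^d$ satisfies
\[
|w_i|^p_{\mathcal X^{s,p}(H)}\le(1+C\eps_0)\,|\phi_iu|^p_{\ssp}+C\eps_0\,|\phi_iu|^p_{\wsp}+(\text{bounded terms}),
\]
to which the half-space case applies. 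Gathering this, the half-space Korn output, and the standard commutator terms $\iint|u(y)|^p|\phi_i(x)-\phi_i(y)|^p|x-y|^{-d-sp}\,dx\,dy$, which sum over $i$ to $\lesssim\|u\|^p_{L^p(D)}$ because $\sum_i|\phi_i(x)-\phi_i(y)|^p\lesssim\min(1,|x-y|^p)$, one reaches for each $i$
\[
|\phi_iu|^p_{\wsp}\le C_1\,|\phi_iu|^p_{\ssp}+C_1\eps_0\,|\phi_iu|^p_{\wsp}+R_i,\qquad\sum_iR_i\le C\,\|u\|^p_{L^p(D)},
\]
with $C_1=C_1(d,s,p)$. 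Summing over $i$, choosing $\eps_0$ with $C_1\eps_0<\tfrac12$ and absorbing, then using $\sum_i|\phi_iu|^p_{\ssp}\lesssim|u|^p_{\ssp}+\|u\|^p_{L^p(D)}$ (which holds because $\sum_i\phi_i^p\le1$) and the elementary $|u|^p_{\wsp}\lesssim\sum_i|\phi_iu|^p_{\wsp}$ (bounded overlap), one obtains $|u|^p_{\wsp}\le C|u|^p_{\ssp}+C\|u\|^p_{L^p(D)}$, hence $\|u\|_{\wsp}\le C\|u\|_{\ssp}$.

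The step I expect to be the genuine obstacle is exactly this interaction between localization and the anisotropy of $\ssp$: unlike $\wsp$, this seminorm is \emph{not} comparable under bi-Lipschitz changes of variables, because the projected difference quotient involves the direction $(x-y)/|x-y|$, which a generic diffeomorphism rotates by a non-orthogonal amount. This is what forces the hypotheses on $D$ and what must be controlled quantitatively: the flattening maps have to be uniformly close to rigid motions, which for a $C^1$ set means shrinking the charts \emph{at the fixed threshold $\eps_0$} (so that the resulting, possibly large, domain-dependent number of charts never feeds back into the smallness needed for the absorption), and for a Lipschitz set means smallness of the Lipschitz constant. A secondary point is establishing the Hardy inequality with the projected seminorm rather than the easier $W^{s,p}$-seminorm on the right: the directional slicing does it, but one must restrict to a cap of non-tangential directions and then verify that this cap still captures the full $|u(x)|^p$ after averaging.
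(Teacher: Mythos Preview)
Your argument is correct and well thought out, but it is organized differently from the paper's main proof, and in fact blends the paper's two separate contributions into one line of reasoning. The paper does \emph{not} use the Hardy route in its proof of Theorem~\ref{th:main}; instead it takes the half-space Korn inequality of Mengesha--Scott as a black box and first upgrades it to \emph{epigraphs} with small Lipschitz constant (Theorem~\ref{th:goodlip}) via the explicit flattening $T(x',x_d)=(x',x_d-f(x'))$ and Nitsche's subtraction trick $|u|_{\wsp}^p\le c_1|u|_{\ssp}^p+c_2(L)|u|_{\wsp}^p$ with $c_2(L)\to0$. The localization to bounded $D$ then uses only \emph{exact} rigid motions $R_i$ acting simultaneously on domain and values, under which $\|\cdot\|_{\ssp}$ is preserved exactly; no absorption occurs at the partition-of-unity stage, because each rotated piece lands in an epigraph where Theorem~\ref{th:goodlip} already applies. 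Your scheme instead performs rotation and flattening together in each chart and carries the $O(\eps_0)$ absorption through the localization; this is equally valid, and your remark that the (possibly large) number of charts never feeds back into the smallness threshold is exactly the point that makes it work. Your derivation of the half-space case from the whole-space inequality plus a projected-seminorm Hardy inequality is precisely the content of the paper's Section~\ref{sec:Hardy}, which the author presents as an \emph{independent} simplification of Mengesha's original half-space proof rather than as an ingredient of the main argument. In short: the paper separates ``epigraph via change of variables'' from ``bounded set via exact rigid motions'', keeping the absorption confined to the former; you fuse the two and additionally supply a self-contained Hardy-based proof of the half-space step. Both approaches rest on the same quantitative observation that the flattening map is $O(L)$-close to a rigid motion in a sense strong enough to control the anisotropic seminorm.
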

This result was obtained very recently by Mengesha and Scott \cite{2020arXiv201112407M} with the use of a complicated extension operator intrinsic to studying projected seminorms. We present a significantly shorter proof which omits the extension: we first obtain the inequality for the epigraphs in Theorem \ref{th:goodlip} by using the result of Mengesha and Scott \cite[Theorem~1.1]{MR3959431} for the half-space together with an appropriate change of variables, and then we apply an argument via the partition of unity. 

In Section~\ref{sec:Hardy} we show that the Korn's inequality for the vector fields of the class $(C^1_c(D))^d$ can be obtained directly from the appropriate Hardy-type inequality for $D$ with the use of an operator which extends the vector field by 0 to the whole space. This in particular yields a simpler proof of the Korn's inequality for the half-space than the original one due to Mengesha \cite{MR4017780}. It may also facilitate the proofs for more general sets $D$ in the future.

For applications, open problems, and a wider context concerning the fractional Korn's inequality we refer to the aforementioned works of Mengesha and Scott. We~remark that the arguments below were obtained independently of the ones in \cite{2020arXiv201112407M}.

\subsection*{Acknowledgements} I thank Bart\l{}omiej Dyda for helpful discussions and remarks to the manuscript. Research was partially supported by the Faculty of Pure and Applied Mathematics, Wroc\l{}aw University of Science and Technology, 049U/0052/19.

\section{Preliminaries and auxiliary results}
Let $f\colon \mR^{d-1} \to \mR$ be a continuous function. The open set $\{(x',x_d)\in \mR^d: f(x') < x_d\}$ will be called the epigraph of $f$. The epigraph of a Lipschitz (resp. $C^1$) function will be called a Lipschitz (resp. $C^1$) epigraph.

\begin{definition}\label{def:Lip}
	We say that an open set $D\subseteq \mR^d$ is Lipschitz with constant $L>0$ if there exist balls $B_1,\ldots B_n$ with centers belonging to $\partial D$, epigraphs $U_1,\ldots,U_n$ of functions $f_1,\ldots,f_n$ with Lipschitz constant $L$ or better, and rigid motions $R_1,\ldots R_n$, such that the following conditions are satisfied
	\begin{itemize}
		\item $\partial D \subset \bigcup\limits_{i=1}^n B_i$,
		\item $R_i(B_i\cap D) \subset U_i$ and $R_i(B_i\cap\partial D) = R_i(B_i)\cap \partial U_i$.
	\end{itemize}
We say that $D$ is a $C^1$ set if the above conditions are satisfied with the difference that the epigraphs $U_1,\ldots,U_n$ and the functions $f_1,\ldots,f_n$ are $C^1$ instead of Lipschitz.
\end{definition}
We also consider an additional open set (not necessarily a ball) $B_{n+1}$, relatively compact in $D$, such that $D~\subset~ \bigcup\limits_{i=1}^{n+1} B_i$. Note that if $D$ is $C^1$, then the balls and the epigraphs may be so chosen that $D$ is Lipschitz with an arbitrarily small constant $L$.

Throughout the paper we will use certain Lipschitz maps as substitutions in the integration process. Below we establish some basic facts about these transformations.
\begin{lemma}\label{lem:sub}
	Let $U$ and $V$ be open subsets of $\mR^d$ and assume that $T\colon U \to V$ is a bijection such that $T$ and $T^{-1}$ are Lipschitz with constant $K\geq 1$. Then for every non-negative measurable function $u\colon V \to \mR$ we have
	\begin{equation*}
	(1/K)^d\int_V u(x)\, dx\leq\int_U u(Tx) \, dx \leq K^d \int_V u(x) \, dx.
	\end{equation*}
\end{lemma}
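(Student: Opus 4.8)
My plan is to deduce the two-sided estimate from the change-of-variables (area) formula for Lipschitz maps, combined with sharp pointwise bounds on the Jacobian of $T$. Since $T$ is Lipschitz, Rademacher's theorem guarantees that $T$ is differentiable at almost every point of $U$; write $DT(x)$ for this a.e.-defined differential. Because $T$ is in addition injective, the area formula yields, for every non-negative measurable $u\colon V\to\mR$,
\[
\int_U u(Tx)\,|\det DT(x)|\,dx=\int_V u(x)\,dx .
\]
Thus everything reduces to proving the pointwise bounds $(1/K)^d\le|\det DT(x)|\le K^d$ for a.e.\ $x\in U$: inserting them into the identity above and estimating the integrand on the left-hand side from below and from above gives the two inequalities of the lemma.

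To get the upper bound on $|\det DT(x)|$, I would note that at any point $x$ of differentiability the inequality $|T(y)-T(x)|\le K|y-x|$ forces $|DT(x)v|\le K|v|$ for all $v$, so $DT(x)$ has operator norm at most $K$ and hence $|\det DT(x)|\le K^d$. For the lower bound I would run the same argument for $S:=T^{-1}\colon V\to U$, which is Lipschitz with the same constant $K$: at almost every $y\in V$ the differential $DS(y)$ exists with $\|DS(y)\|\le K$, and since Lipschitz maps send null sets to null sets, also $DS(Tx)$ exists with $\|DS(Tx)\|\le K$ for a.e.\ $x\in U$. At such an $x$ the chain rule applied to $S\circ T=\mathrm{id}_U$ gives $DS(Tx)\,DT(x)=I$, so $DT(x)$ is invertible with inverse $DS(Tx)$; consequently $|\det DT(x)|^{-1}=|\det DS(Tx)|\le K^d$, i.e.\ $|\det DT(x)|\ge(1/K)^d$.

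The only point that needs genuine care — and the step I expect to be the main obstacle — is justifying that the identity $DS(Tx)\,DT(x)=I$ holds for a.e.\ $x\in U$; this rests on Rademacher's theorem together with the fact that a Lipschitz map enjoys Luzin's property (N), so that the images under $T$ and $T^{-1}$ of the exceptional null sets are again null. Everything else is routine. If one prefers to avoid the area formula, there is an alternative elementary route: one shows directly that $|T(A)|\le K^d|A|$ for every measurable $A\subseteq U$ by covering $A$ up to a null set with disjoint balls $B(x_i,r_i)$ centered at points $x_i\in A$ with $\sum_i|B(x_i,r_i)|\le|A|+\eps$ (Vitali covering lemma), and observing that $T(B(x_i,r_i)\cap U)\subseteq B(Tx_i,Kr_i)$; applying this to $T^{-1}$ gives $|T^{-1}(E)|\le K^d|E|$ for measurable $E\subseteq V$, whence the claimed inequalities follow for indicator functions, then for simple functions by linearity, and finally for general non-negative measurable $u$ by monotone convergence. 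In this approach the lower estimate can instead be obtained from the upper one by replacing $T$ with $T^{-1}$ and $u$ with $u\circ T$.
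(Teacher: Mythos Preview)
Your proposal is correct and follows essentially the same route as the paper: both apply the area/change-of-variables formula for Lipschitz bijections (the paper cites Haj\l asz's result for this) and reduce the inequality to the pointwise estimate $(1/K)^d\le|J_T|\le K^d$ a.e., then use $|J_{T^{-1}}(Tx)|=|J_T(x)|^{-1}$. The only difference is how this Jacobian bound is obtained: the paper argues via the identity $\fint_{B(x_0,r)}|J_T|=|TB(x_0,r)|/|B(x_0,r)|$ together with $TB(x_0,r)\subseteq B(Tx_0,Kr)$ and Lebesgue differentiation, whereas you go straight from $\|DT(x)\|\le K$ (immediate from the Lipschitz bound at points of differentiability) to $|\det DT(x)|\le K^d$, which is slightly more direct. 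Your alternative Vitali-covering route, bypassing the area formula entirely, is also valid and gives a more self-contained proof.
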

\begin{proof}
	This fact follows conveniently from the result of Haj\l{}asz \cite[Appendix]{MR1201446}, see also Rado and Reichelderfer \cite[V.2.3]{MR0079620}. To verify the validity of the constants we first claim that $J_T$ --- the Jacobian of $T$ satisfies $|J_T| \leq K^d$ almost everywhere in $U$. Indeed, let $x_0\in U$ and $r>0$ satisfy $B(x_0,r)\subset U$. If we take $f = T$ and $u = \textbf{1}_{T B(x_0,r)}$ in \cite{MR1201446}, then we get that
	$$\int_{B(x_0,r)} |J_T(x)|\, dx = \int_{T B(x_0,r)}\, dy.$$
	Thus,
	\begin{equation}\label{eq:Lebesgue}
	\frac 1{|B(x_0,r)|}\int_{B(x_0,r)} |J_T(x)|\, dx = \frac {|T B(x_0,r)|}{|B(x_0,r)|}.
	\end{equation}
	The limit $r\to 0^+$ on the left-hand side of \eqref{eq:Lebesgue} exists and equals $J_T(x_0)$ for almost every $x_0\in U$ by the Lebesgue differentiation theorem. Furthermore, we have $TB(x_0,r)\subseteq B(T(x_0),Kr)$, hence the right-hand side of \eqref{eq:Lebesgue} is bounded from above by $K^d$. This proves the claim that $|J_T(x)|\leq K^d$ for almost every $x\in U$. Similarly we show that $|J_{T^{-1}}| \leq K^d$ almost everywhere in $V$. Thus, the lemma follows from \cite{MR1201446} and the formula $|J_{T^{-1}}(Tx)| = |J_T(x)|^{-1}$.
\end{proof}

We will commonly map the epigraph of a Lipschitz function $f\colon \mR^{d-1} \to \mR$ to the half-space $\mR^d_+$ as follows:
$$T(x',x_d) = (x',x_d - f(x')),\qquad x'\in\mR^{d-1},\ x_d > f(x').$$
Clearly this is a bijection with the inverse
$$T^{-1}(x',x_d) = (x', x_d + f(x')), \qquad x'\in \mR^{d-1},\ x_d>0.$$
\begin{lemma}\label{lem:lambdadist}
	If $f$ is Lipschitz with constant $L$, then for every $x$ and $y$ in the epigraph of $f$ we have
	\begin{equation}\label{eq:bilip}C(L)^{-1}|x-y| \leq |T x - T y| \leq C(L)|x-y|,\end{equation}
	where $C(L) = \sqrt{1 + L(L+1)}$. In particular, $C(L) \to 1$ as $L\to 0^+$.
\end{lemma}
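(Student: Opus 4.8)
The plan is to prove the bilipschitz estimate \eqref{eq:bilip} by direct computation. Write $x = (x',x_d)$ and $y=(y',y_d)$ with $x_d > f(x')$ and $y_d > f(y')$. Since $T$ acts as the identity on the first $d-1$ coordinates, we have $|Tx - Ty|^2 = |x'-y'|^2 + |(x_d - f(x')) - (y_d - f(y'))|^2$. The only term that changes is the last coordinate, so the whole problem reduces to comparing $|(x_d - y_d) - (f(x') - f(y'))|$ with $|x_d - y_d|$, using the Lipschitz bound $|f(x') - f(y')| \le L|x'-y'|$.

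For the upper bound in \eqref{eq:bilip}, I would set $a = x_d - y_d$ and $b = f(x')-f(y')$, so that $|b| \le L|x'-y'|$, and estimate
\begin{equation*}
|Tx - Ty|^2 = |x'-y'|^2 + (a-b)^2 \le |x'-y'|^2 + 2a^2 + 2b^2 \le |x'-y'|^2 + 2a^2 + 2L^2|x'-y'|^2,
\end{equation*}
and then bound this by a multiple of $|x-y|^2 = |x'-y'|^2 + a^2$. A cruder but cleaner route is to use $(a-b)^2 \le (|a| + L|x'-y'|)^2$ and the inequality $(\alpha+\beta)^2 \le (1+L)\alpha^2 + (1 + 1/L)\beta^2$ (Young with weight $L$), which after collecting terms gives exactly the constant $C(L)^2 = 1 + L(L+1)$; I would check that this matches the stated $C(L) = \sqrt{1+L(L+1)}$ and that the same constant works for the lower bound. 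The lower bound $|Tx - Ty| \ge C(L)^{-1}|x-y|$ then follows either by the symmetric computation applied to $T^{-1}$ (which is of the same form, with $f$ replaced by $-f$, hence has the same Lipschitz constant $L$), or equivalently by running the upper-bound argument with the roles of $x$ and $Tx$ interchanged. The final claim $C(L) \to 1$ as $L \to 0^+$ is immediate from continuity of $L \mapsto \sqrt{1+L(L+1)}$ at $0$.

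I do not expect any serious obstacle here; the only mildly delicate point is bookkeeping the constant so that it comes out to precisely $\sqrt{1+L(L+1)}$ rather than some other equivalent expression, which just requires choosing the right weight in the elementary inequality. I would present the upper bound in full and then remark that the lower bound follows by applying the upper bound to $T^{-1}$, whose defining function $-f$ has the same Lipschitz constant, so the same $C(L)$ is obtained. This keeps the proof to a few lines.
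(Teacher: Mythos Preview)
Your proposal is correct and follows essentially the same route as the paper: both reduce to the last coordinate, bound $(|x_d-y_d|+|f(x')-f(y')|)^2$ by an elementary quadratic inequality to obtain the exact constant $1+L(L+1)$, and deduce the lower bound by applying the upper bound to $T^{-1}$. The only cosmetic difference is that the paper computes the ratio $|Tx-Ty|^2/|x-y|^2$ directly and uses $2ab\le a^2+b^2$, whereas you use the weighted Young inequality $(\alpha+\beta)^2\le(1+L)\alpha^2+(1+1/L)\beta^2$; after collecting terms (noting $1+L\le 1+L(L+1)$) these yield the same bound.
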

\begin{proof}
	Let $x$ and $y$ belong to the epigraph of $f$. We may reduce the problem to the planar geometry. Let $a = |x'-y'|$, $b = |x_d - y_d|$, $c = |x-y|$, $c' = |Tx - Ty|$, $d = |f(x') - f(y')|$. Note that $d\leq La$.
	\begin{figure}[H]
		\begin{tikzpicture}
			\filldraw (0,0) circle(1pt) node[above left]{$x$};
			\draw (0,0)--(3,0)--(3,-1)--(0,0);
			\filldraw (3,-1) circle(1pt) node[below right]{$y$};
			\draw (1.5,0) node[above] {$a$};
			\draw (3,-0.5) node[right] {$b$};
			\draw (1.5,-0.5) node[below left] {$c$};
			\filldraw (6,0) circle(1pt) node[above left]{$Tx$};
			\draw (6,0)--(9,0)--(9,-2.5)--(6,0);
			\filldraw (9,-2.5) circle(1pt) node[below right]{$Ty$};
			\draw (7.5,0) node[above]{$a$};
			\draw (9,-1.25) node[right] {$b+d$};
			\draw (7.5,-1.25) node[below left] {$c'$};
		\end{tikzpicture}
		\caption{The picture presents the \textit{pessimistic} variant with $(x_d - y_d)(f(x') - f(y'))> 0$.}
		\label{fig:1}
	\end{figure}
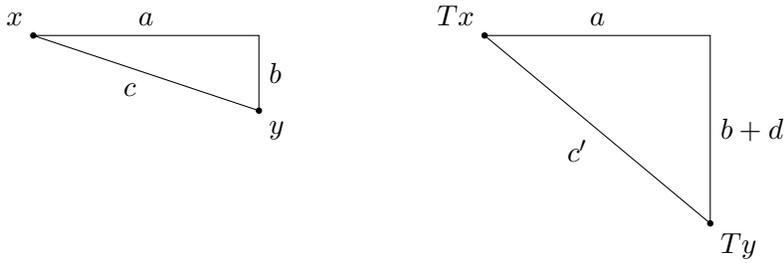
	We will work with the context given in Figure \ref{fig:1}. Assuming that $x\ne y$, we have
	\begin{align}\label{eq:stala}
	\frac {(c')^2}{c^2} = 1 + \frac{2bd + d^2}{a^2+b^2} \leq 1 + \frac{2Lba + L^2a^2}{a^2+b^2} = 1 + L \frac{La^2 + 2ab}{a^2 + b^2} \leq 1 + L\frac{(L+1)a^2 + b^2}{a^2+b^2} \leq 1 + L(L+1).
	\end{align}
	This gives the right-hand side part of \eqref{eq:bilip}. A similar argument may be used with $T^{-1}$ in place of $T$, giving the left-hand side of \eqref{eq:bilip}.
\end{proof}
\begin{lemma}\label{lem:unit}
		Assume that $D$ is bounded and that $\psi\in C^\infty_b(D)$. Then $|u\psi|_{\ssp} \lesssim\|u\|_{\ssp}$. An analogous result holds with $W^{s,p}$ in place of $\mathcal{X}^{s,p}$.
\end{lemma}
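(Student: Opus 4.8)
The plan is to expand the product difference by the Leibniz-type identity
\[(u\psi)(x) - (u\psi)(y) = \psi(x)\bigl(u(x) - u(y)\bigr) + u(y)\bigl(\psi(x) - \psi(y)\bigr),\]
project both sides onto the unit vector $(x-y)/|x-y|$, and apply $|a+b|^p \le 2^{p-1}(|a|^p+|b|^p)$ together with the trivial bound $\bigl|u(y)\cdot\tfrac{x-y}{|x-y|}\bigr| \le |u(y)|$. This dominates the integrand of $|u\psi|_{\ssp}^p$ by
\[2^{p-1}\|\psi\|_\infty^p\,\frac{\bigl|(u(x)-u(y))\cdot\frac{x-y}{|x-y|}\bigr|^p}{|x-y|^{d+sp}} \;+\; 2^{p-1}\,\frac{|\psi(x)-\psi(y)|^p\,|u(y)|^p}{|x-y|^{d+sp}}.\]
Integrating the first summand over $D\times D$ produces exactly $2^{p-1}\|\psi\|_\infty^p\,|u|_{\ssp}^p$, which is already bounded by a constant times $\|u\|_{\ssp}^p$.

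For the second summand I would integrate in the variable $x$ first. Since $\psi\in C^\infty_b(D)$ it is bounded and Lipschitz; writing $\Lambda$ for a constant with $\|\psi\|_\infty \le \Lambda$ and $|\psi(x)-\psi(y)|\le \Lambda|x-y|$, one gets, after enlarging $\Lambda$, $|\psi(x)-\psi(y)|^p \le \Lambda^p\min(|x-y|^p,1)$. Splitting $\int_D|x-y|^{-d-sp}|\psi(x)-\psi(y)|^p\,dx$ at $|x-y|=1$ and using that $D$ is bounded, the near part is at most $\Lambda^p\int_{\{|z|<1\}}|z|^{p-sp-d}\,dz$, which is finite because $p-sp=p(1-s)>0$, and the far part is at most $\Lambda^p\int_{\{|z|\ge1\}}|z|^{-d-sp}\,dz<\infty$; hence $\sup_{y\in D}\int_D|x-y|^{-d-sp}|\psi(x)-\psi(y)|^p\,dx \le C(\Lambda,d,s,p)$. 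Consequently the double integral of the second summand is bounded by $C\,\|u\|_{L^p(D)}^p$, and adding the two contributions gives $|u\psi|_{\ssp}^p \lesssim \|u\|_{\ssp}^p$. The $W^{s,p}$ statement is proved identically, just dropping the projection: one estimates $|(u\psi)(x)-(u\psi)(y)|^p \le 2^{p-1}\bigl(\|\psi\|_\infty^p|u(x)-u(y)|^p + |\psi(x)-\psi(y)|^p|u(y)|^p\bigr)$ and integrates as above.

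The only point requiring a little care --- the main (mild) obstacle --- is the uniform-in-$y$ bound for $\int_D|x-y|^{-d-sp}|\psi(x)-\psi(y)|^p\,dx$: it relies on the Lipschitz continuity of $\psi$ to tame the singularity near the diagonal (integrability there is automatic since $p(1-s)>0$, so no assumption on $sp$ enters the lemma) and on the boundedness of $D$ and of $\psi$ away from the diagonal. I should note that $C^\infty_b(D)$ is used here only through $\|\psi\|_\infty<\infty$ and $\operatorname{Lip}(\psi)<\infty$; in the partition-of-unity application the relevant cut-offs are restrictions of functions in $C^\infty_c(\mR^d)$, which are globally Lipschitz, so this causes no trouble.
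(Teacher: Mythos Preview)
Your argument is correct and follows essentially the same route as the paper: the Leibniz splitting of $(u\psi)(x)-(u\psi)(y)$, the bound $|\psi(y)|\le\|\psi\|_\infty$ for the term producing $|u|_{\ssp}^p$, and the Lipschitz estimate $|\psi(x)-\psi(y)|\lesssim|x-y|$ to reduce the other term to $\|u\|_{L^p(D)}^p$. The only cosmetic difference is that the paper uses the Lipschitz bound together with the boundedness of $D$ directly (so that $\int_D|x-y|^{p-sp-d}\,dy\le C\,\diam(D)^{p(1-s)}$) rather than splitting at $|x-y|=1$.
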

\begin{proof}
	We have
	\begin{align*}
	\int_D\int_D\frac{\big| (u\psi(x) - u\psi(y))\frac{(x-y)}{|x-y|}\big|^p}{|x-y|^{d+sp}}\, dx\, dy &\lesssim \int_D\int_D \frac{\big|(\psi(x) - \psi(y))u(x)\frac{(x-y)}{|x-y|}\big|^p}{|x-y|^{d+sp}}\, dx\, dy\\
	&+\int_D\int_D \frac{\big| \psi(y)(u(x) - u(y))\frac{(x-y)}{|x-y|}\big|^p}{|x-y|^{d+sp}}\, dx\, dy.
	\end{align*}
	The latter integral is smaller than $\|\psi\|_{L^\infty(D)}^p|u|_{\ssp}^p$. For the former we use the fact that $|\psi(x) - \psi(y)| \lesssim |x-y|$ to get that it does not exceed $c\|u\|_{L^p(D)}^p$. The proof for $W^{s,p}$ is identical.
\end{proof}
Let $B_\delta = \{x \in B: d(x,\partial B) > \delta\}$. In the subsequent section we will apply an argument using a partition of unity subordinate to $B_1,\ldots,B_n,B_{n+1}$ from Definition~\ref{def:Lip}. This in particular will require extending vector fields given on $B_i\cap D$ and supported in $\overline{(B_i)_\delta\cap D}$ for some fixed $\delta > 0$, to a rotated epigraph ($1\leq i\leq n$) or to the whole of $\mR^d$ ($i=n+1$). The following result enables us to perform such operations.
\begin{lemma}\label{lem:0ext}
	Let the open sets $B,U\subseteq \mR^d$ satisfy $U\cap B_\delta \ne \emptyset$ and $U\setminus B \ne \emptyset$. Assume that $u\in \mathcal{X}^{s,p}(U\cap B)$ has support contained in $\overline{U\cap B_\delta}$ for fixed $\delta > 0$. If we let $\wtu = u$ in $U\cap B$ and $\wtu = 0$ in $U\setminus B$, then $\|\wtu\|_{\mathcal{X}^{s,p}(U)} \lesssim \|u\|_{\mathcal{X}^{s,p}(U\cap B)}.$ Analogous result holds with $W^{s,p}$ in place of $\mathcal{X}^{s,p}$.
\end{lemma}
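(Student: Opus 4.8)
The plan is to use the partition $U = (U\cap B)\sqcup(U\setminus B)$ (note that $U\setminus B = U\cap B^c$) and to compare $\wtu$ with $u$ region by region; the assumptions $U\cap B_\delta\ne\emptyset$ and $U\setminus B\ne\emptyset$ only exclude trivial cases and do not really enter the estimate. The $L^p$ part is immediate, since $\wtu$ vanishes off $U\cap B$, so $\|\wtu\|_{L^p(U)} = \|u\|_{L^p(U\cap B)}$. For the seminorm I would split the double integral defining $|\wtu|_{\mathcal{X}^{s,p}(U)}^p$ over $U\times U$ into $(U\cap B)^2$, $(U\setminus B)^2$, and the two symmetric cross regions $(U\cap B)\times(U\setminus B)$. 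On $(U\cap B)^2$ the integrand is exactly the one defining $|u|_{\mathcal{X}^{s,p}(U\cap B)}^p$; on $(U\setminus B)^2$ both $\wtu(x)$ and $\wtu(y)$ vanish, so this region contributes nothing; and on the cross regions, using $\big|\tfrac{x-y}{|x-y|}\big| = 1$, the contribution is at most $2\int_{U\cap B}|u(x)|^p\big(\int_{U\setminus B}|x-y|^{-d-sp}\,dy\big)\,dx$.

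Everything then reduces to the inner integral, and this is the only place the support hypothesis enters. If $x$ lies in the support of $u$, then $x\in\overline{U\cap B_\delta}$; every point of $B_\delta$ belongs to $B$ and, being at distance exceeding $\delta$ from $\partial B$, is at distance exceeding $\delta$ from $\mR^d\setminus B$ as well, so by continuity of $z\mapsto d(z,\mR^d\setminus B)$ this distance is at least $\delta$ on $\overline{B_\delta}$. Hence $|x-y|\geq\delta$ for every $y\in U\setminus B$, and therefore
$$\int_{U\setminus B}\frac{dy}{|x-y|^{d+sp}}\leq\int_{\{|z|\geq\delta\}}\frac{dz}{|z|^{d+sp}} = c(d,s,p)\,\delta^{-sp} =: c_\delta<\infty,$$
where the finiteness is precisely where $sp>0$ matters. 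Thus the cross regions contribute at most $2c_\delta\|u\|_{L^p(U\cap B)}^p$, so $|\wtu|_{\mathcal{X}^{s,p}(U)}^p\leq|u|_{\mathcal{X}^{s,p}(U\cap B)}^p + 2c_\delta\|u\|_{L^p(U\cap B)}^p$; together with the $L^p$ identity this yields $\|\wtu\|_{\mathcal{X}^{s,p}(U)}\lesssim\|u\|_{\mathcal{X}^{s,p}(U\cap B)}$ with an implied constant depending only on $d,s,p,\delta$. The $W^{s,p}$ statement follows verbatim, because the only use of the projected structure above was the trivial bound $\big|\xi\tfrac{x-y}{|x-y|}\big|\leq|\xi|$, which is an equality for the relevant factor of the full difference quotient.

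I do not anticipate a real obstacle here: the argument is the routine splitting of a Gagliardo-type double integral, and the single point requiring care is the elementary geometric observation that $\operatorname{supp}u$ stays at distance at least $\delta$ from $\mR^d\setminus B$, which is what keeps the cross term both finite and controlled by $\|u\|_{L^p(U\cap B)}$.
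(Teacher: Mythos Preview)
Your proposal is correct and follows the same approach as the paper: split the double integral over $U\times U$ into the diagonal block $(U\cap B)^2$, the vanishing block $(U\setminus B)^2$, and the cross terms, then use the support condition to ensure $|x-y|\geq\delta$ in the latter so that the inner integral is bounded by a constant $c(\delta)$. The paper's write-up is slightly terser (it integrates the cross term directly over $U\cap B_\delta$ rather than over $U\cap B$), but the argument is identical.
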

\begin{proof}
	Obviously, it suffices to estimate $|\wtu|_{\mathcal{X}^{s,p}(U)}$. Since $\wtu = 0$ on $B_\delta^c$ we have
	\begin{align*}
	\int_{U}\int_{U} \frac{\big|(\wtu(x) - \wtu(y))\frac{(x-y)}{|x-y|}\big|^p}{|x-y|^{d+sp}} \, dx \, dy &\leq |u|_{\mathcal{X}^{s,p}(U\cap B)}^p + 2\int_{U\cap B_\delta}|u(x)|^p\int_{U\setminus B} \frac{1}{|x-y|^{d+sp}} \, dy \, dx\\
	&\leq |u|_{\mathcal{X}^{s,p}(U\cap B)}^p + c(\delta)\|u\|_{L^p(U\cap B)}^p\lesssim \|u\|_{\mathcal{X}^{s,p}(U\cap B)}^p.
	\end{align*}
	The proof for $W^{s,p}$ is identical.
\end{proof}

\section{Proof of the Korn's inequality}
We will show that the Korn's inequality holds for the epigraphs with sufficiently small Lipschitz constant and then use this fact to establish Theorem \ref{th:main}.

\begin{theorem}\label{th:goodlip}
	Assume that $sp\neq 1$ and that $D$ is the epigraph of a Lipschitz function $f$ with sufficiently small Lipschitz constant $L$. Then there exists $C\geq 1$ such that for every $u\in\sosp$ we have
	$$C\|u\|_{\ssp} \geq \|u\|_{\wsp}.$$
	Consequently, $\sosp = \wosp$.
\end{theorem}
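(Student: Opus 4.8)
The strategy is to reduce the inequality on the Lipschitz epigraph $D$ to the known Korn's inequality on the half-space $\mR^d_+$ (Mengesha--Scott, \cite[Theorem~1.1]{MR3959431}) by means of the flattening substitution $T(x',x_d) = (x',x_d - f(x'))$ from Lemma \ref{lem:lambdadist}. Given $u\in\sosp$, I would associate to it a vector field $v$ on $\mR^d_+$ obtained from $u\circ T^{-1}$. There is a subtlety: the projected seminorm involves the \emph{direction} $(x-y)/|x-y|$, which is \emph{not} preserved by $T$; so one should not expect the projected seminorm to transform cleanly under a mere composition. The point is that $T$ is bilipschitz with constant $C(L)\to 1$ as $L\to 0^+$ (Lemma \ref{lem:lambdadist}), so the error incurred by replacing the direction $(Tx-Ty)/|Tx-Ty|$ by $(x-y)/|x-y|$ is controlled by a factor depending on $L$ that is close to $1$. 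Concretely, I would write, for the pulled-back field,
$$\Big|(u(x)-u(y))\tfrac{x-y}{|x-y|}\Big| \le \Big|(u(x)-u(y))\tfrac{Tx-Ty}{|Tx-Ty|}\Big| + \big|u(x)-u(y)\big|\cdot\Big|\tfrac{x-y}{|x-y|}-\tfrac{Tx-Ty}{|Tx-Ty|}\Big|,$$
and bound the last factor by something like $cL$ using the explicit form of $T$ and $C(L)$. This is the crux of the argument and where the smallness of $L$ is used.

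With that estimate in hand, the plan is: (i) transport $|u|_{\wsp}$ and $|u|_{\ssp}$ to the half-space using Lemma \ref{lem:sub} and Lemma \ref{lem:lambdadist}, picking up only constants $C(L)^{d+sp}$; (ii) apply the half-space Korn inequality $\|v\|_{W^{s,p}(\mR^d_+)} \le C_0 \|v\|_{\mathcal{X}^{s,p}(\mR^d_+)}$ to the transported field $v$, which is legitimate because $u\in\sosp$ and $T$ maps $(C^1_c(D))^d$ into $(C^1_c(\mR^d_+))^d$ up to the bilipschitz distortion, so $v$ lies in the closure $\mathcal{X}_0^{s,p}(\mR^d_+)$; (iii) on the right-hand side, after pulling back to $D$, split the half-space projected seminorm of $v$ into the "true" projected seminorm of $u$ plus the error term above. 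The error term is bounded, via Lemma \ref{lem:unit}-type reasoning (in fact directly, since $|u(x)-u(y)|$ appears), by $cL\,|u|_{\wsp}$. Collecting everything gives
$$\|u\|_{\wsp} \le C_1\|u\|_{\ssp} + C_2 L\,\|u\|_{\wsp},$$
with $C_1, C_2$ independent of $L$ (for $L$ below a threshold). Choosing $L$ small enough that $C_2 L \le \tfrac12$, we absorb the last term into the left-hand side and obtain $\|u\|_{\wsp}\le 2C_1\|u\|_{\ssp}$, which is the claim.

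The identity $\sosp=\wosp$ follows immediately: one inclusion is the trivial bound $\|\cdot\|_{\ssp}\le\|\cdot\|_{\wsp}$, which makes $\mathrm{id}\colon\wosp\to\sosp$ continuous with dense image; the just-proved reverse inequality shows this map is also bounded below, hence a homeomorphism onto its image, and since $(C^1_c(D))^d$ is dense in both, the closures coincide.

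I expect the main obstacle to be step (i)--(iii)'s bookkeeping of the direction error: one must verify that the quantity $\big|\tfrac{x-y}{|x-y|}-\tfrac{Tx-Ty}{|Tx-Ty|}\big|$ is genuinely $O(L)$ uniformly in $x,y$ in the epigraph (using only $a\le|x-y|$, $|f(x')-f(y')|\le La$, and the computation behind \eqref{eq:stala}), and that the constant multiplying $L$ in the final absorbed term does not secretly depend on $L$. A secondary point is checking that the flattening map sends $\sosp$ into $\mathcal{X}_0^{s,p}(\mR^d_+)$ and back, which is a routine density argument built on Lemmas \ref{lem:sub} and \ref{lem:lambdadist} applied to smooth compactly supported fields.
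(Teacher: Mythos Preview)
Your proposal is correct and follows essentially the same route as the paper: flatten $D$ to $\mR^d_+$ via $T$, apply the half-space Korn inequality to $v=u\circ T^{-1}$, then control the mismatch between the transported direction $(Tx-Ty)/|Tx-Ty|$ and the original $(x-y)/|x-y|$ by an $O(L)$ term that is absorbed into the left-hand side. The only cosmetic difference is that the paper exploits the explicit identity $Tx-Ty=(x-y)-(0,\dots,0,f(x')-f(y'))$, so the remainder involves only $u^d$ and $|f(x')-f(y')|\le L|x-y|$, whereas you bound the unit-vector discrepancy abstractly; both yield the same absorption inequality \eqref{eq:subtr} and the same density argument for $\sosp=\wosp$.
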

\begin{proof}
	Following the approach of Nitsche \cite[Remark 3]{MR631678} we will show that there exist $c_1 = c_1(L)$ and $c_2 = c_2(L)$, such that
	\begin{equation}\label{eq:subtr}
	|u|_{\wsp}^p \leq c_1|u|_{\ssp}^p + c_2|u|_{\wsp}^p.
	\end{equation}
	We will propose an explicit form of $c_1$ and $c_2$ so that it will be obvious that for sufficiently small $L$ we have $c_2 < 1$ and the statement will follow by subtracting $c_2|u|_{\wsp}^p$.
	
	Let $u\in (C^1_c(D))^d\subseteq \wosp$. If we substitute $(w',w_d) = (x',x_d - f(x'))$ and $(z',z_d) = (y',y_d - f(y'))$, then by Lemmas  \ref{lem:sub} and \ref{lem:lambdadist} (see the latter for the definition of $C(L)$) we get
	\begin{align}
	|u|_{\wsp}^p &\leq C(L)^{2d} \int_{\mR^d_+}\int_{\mR^d_+}\frac{|u(w',w_d + f(w')) - u(z',z_d + f(z'))|^p}{|(w',w_d + f(w')) - (z', z_d + f(z'))|^{d+sp}} \, dz \, dw\nonumber\\
	&\leq C(L)^{3d+sp}\int_{\mR^d_+}\int_{\mR^d_+} \frac{|u(w',w_d + f(w')) - u(z',z_d + f(z'))|^p}{|w-z|^{d+sp}} \, dz \, dw.\label{eq:uv}
	\end{align}
	Let $v(w',w_d) = u(w',w_d + f(w'))$. Note that the above inequalities are in fact comparisons, in particular the double integral in \eqref{eq:uv} is finite, which means that $v \in W_0^{s,p}(\mR^d_+)$. Now, if we let $C_K$ be the constant in the Korn's inequality of Mengesha and Scott \cite[Theorem~1.1]{MR3959431}, and $u^d$ --- the $d$-th coordinate of $u$, then we can estimate the double integral in \eqref{eq:uv} as follows:
	\begin{align}
	&\int_{\mR^d_+}\int_{\mR^d_+} \frac{|v(w) - v(z)|^p}{|w-z|^{d+sp}} \, dz \, dw\nonumber\\
	&\leq C_K \int_{\mR^d_+}\int_{\mR^d_+} \frac{\big|(v(w) - v(z))\frac{(w-z)}{|w-z|}\big|^p}{|w-z|^{d+sp}} \, dz \, dw\nonumber\\
	&= C_K\int_{\mR^d_+}\int_{\mR^d_+} \frac{\big|(u(w',w_d + f(w')) - u(z',z_d + f(z')))\frac{(w-z)}{|w-z|}\big|^p}{|w-z|^{d+sp}} \, dz \, dw\nonumber\\
	&\leq 2^{p-1}C_K \int_{\mR^d_+}\int_{\mR^d_+} \frac{\big|(u(w',w_d + f(w')) - u(z',z_d + f(z')))((w',w_d+f(w')) - (z',z_d + f(z')))\big|^p}{|w-z|^{d+sp+p}}\, dz \, dw\label{eq:proj}\\
	&+ 2^{p-1}C_K\int_{\mR^d_+}\int_{\mR^d_+} \frac{\big|(u^d(w',w_d + f(w')) - u^d(z',z_d + f(z')))(f(z') - f(w'))\big|^p}{|w-z|^{d+sp+p}}\, dz \, dw.\label{eq:sob}
	\end{align}
	By going back to the old variables and by using Lemma \ref{lem:lambdadist} once more, we get that \eqref{eq:proj} is estimated from above by
	\begin{align*}
	2^{p-1}C_K C(L)^{3d+sp+p} &\int_{D}\int_{D} \frac{\big|(u(x) - u(y))(x - y)\big|^p}{|x-y|^{d+sp+p}}\, dx \, dy = 2^{p-1}C_K C(L)^{3d+sp+p} |u|_{\ssp}^p.
	\end{align*}
	In \eqref{eq:sob} we also substitute the old variables so that it is estimated from above by
	\begin{align*}
	&2^{p-1}C_KC(L)^{3d+sp+p}\int_{D}\int_{D} \frac{\big|(u^d(x) - u^d(y))(f(y') - f(x'))\big|^p}{|x-y|^{d+sp+p}}\, dx \, dy\\
	\leq\, &2^{p-1}C_KC(L)^{3d+sp+p}L^p\int_{D}\int_{D} \frac{\big|u^d(x) - u^d(y)\big|^p}{|x-y|^{d+sp}}\, dx \, dy\\
	\leq \, &2^{p-1}C_KC(L)^{3d+sp+p}L^p|u|_{\wsp}^p.
	\end{align*}
	Overall, we get \eqref{eq:subtr}:
	\begin{equation*}
	|u|_{\wsp}^p \leq 2^{p-1}C_K C(L)^{6d+2sp+p}|u|_{\ssp}^p + 2^{p-1}C_K C(L)^{6d+2sp+p}L^p|u|_{\wsp}^p.
	\end{equation*}
	Since $C(L)\approx 1$ for small $L$, the second constant can be made arbitrarily small, which yields the Korn's inequality for $u\in (C^1_c(D))^d$. 
	
	Now let $u\in \wosp$. There exists a sequence $u_n\in (C^1_c(D))^d$ such that $\|u_n - u\|_{\wsp}\to 0$ as $n\to\infty$. From this we infer that $\|u_n - u\|_{\ssp} \to 0$ as well, and by using the Korn's inequality for $u_n$ we get that
	$$|u|_{\wsp} \leq |u_n - u|_{\wsp} + |u_n|_{\wsp} \leq |u_n -u|_{\wsp} + C|u_n|_{\ssp}.$$
	By letting $n\to \infty$, we obtain the Korn's inequality for $\wosp$. Since the identity mapping is a bi-Lipschitz homeomorphism from $\wosp$ to $\sosp$ and both are Banach spaces, the former is a closed subspace of the latter. By density, $\wosp = \sosp$.
\end{proof}

\begin{proof}[Proof of Theorem \ref{th:main}]
First, suppose that $u\in \wosp$. We may assume that $D$ is a Lipschitz set with the constant $L$ satisfying the assumptions of Theorem \ref{th:goodlip}. Let $B_1,\ldots,B_n, B_{n+1},\, R_1,\ldots, R_n, \, f_1,\ldots,f_n$, and $U_1,\ldots, U_n$ be as in Definition \ref{def:Lip}, and let $U_{n+1} = \mR^d$ and $R_{n+1} = I$. We consider a smooth partition of unity $\psi_1,\ldots, \psi_{n+1}$ subordinate to $B_1,\ldots, B_{n+1}$, i.e., $0\leq\psi_i\leq 1$, $\textrm{supp}(\psi_i) \subset B_i$, and $\sum \psi_i = 1$ on $D$.

We define $u_i = u\psi_i$, $i=1,\ldots, n+1$. By the triangle inequality we have
\begin{equation*}
|u|_{\wsp} \lesssim \sum\limits_{i=1}^{n+1} |u_i|_{\wsp}.
\end{equation*}
Furthermore, $|u_i|_{\wsp} \lesssim \|u_i\|_{W^{s,p}(B_i\cap D)}$ by Lemma \ref{lem:0ext}.

Now, for $i=1,\ldots,n+1$ we extend $R_i(u_i)(R_i^{-1} (\cdot))$ from $R_i(B_i\cap D)$ to $U_i$ by 0 and we call the resulting vector fields $\wtu_i$. Unlike $\|u\|_{\ssp}$, the norm $\|u\|_{\wsp}$ is invariant under the rotations of $u$, so it is crucial that we rotate $u_1,\ldots,u_n$ at this point, so that they agree with the new coordinate system. Obviously, we have $\|u_i\|_{W^{s,p}(B_i\cap D)} \leq \|\wtu_i\|_{W^{s,p}(U_i)}$ and by Lemma \ref{lem:0ext} the right-hand sides are finite for all $i$, hence $\wtu_i\in W_0^{s,p}(U_i)$. By using Theorem \ref{th:goodlip} for $\wtu_1,\ldots,\wtu_n$ and the Korn's inequality for the whole space \cite[Theorem~1.1]{MR3959431} for $\wtu_{n+1}$, we obtain
\begin{equation*}
|u|_{\wsp} \lesssim \sum\limits_{i=1}^{n+1} \|\wtu_i\|_{\mathcal{X}^{s,p}(U_i)}.
\end{equation*}
By the definition of $\wtu_i$ and Lemmas \ref{lem:unit} and \ref{lem:0ext}, we get that for every $i=1,\ldots,n+1$, $$\|\wtu_i\|_{\mathcal{X}^{s,p}(U_i)} \lesssim \|R_i(u_i)(R_i^{-1}(\cdot))\|_{\mathcal{X}^{s,p}(R_i(B_i\cap D))} = \|u_i\|_{\mathcal{X}^{s,p}(B_i\cap D)}\lesssim \|u\|_{\ssp}.$$
This concludes the proof of the Korn's inequality for $\wosp$. The result for $\sosp$ is obtained as in the last part of the proof of Theorem \ref{th:goodlip}.
\end{proof}
\section{Application of the Hardy's inequality}\label{sec:Hardy}
In \cite[Theorem 2.3]{MR4017780} Mengesha gives a Hardy-type inequality for the half-space $\mR^d_+$, $p\geq 1$, $s\in (0,1)$, $sp\neq 1$, and $u\in C^1_c(\mR^d_+)$:
\begin{equation*}
\int_{\mR^d_+}\frac{|u(x)|^p}{x_d^{sp}}\, dx \lesssim |u|_{\mathcal{X}^{s,p}(\mR^d_+)}^p.
\end{equation*}
In this section we give a simple framework which allows to obtain the Korn's inequality for open sets $D$ directly from the Korn's inequality for the whole space and the Hardy's inequality for $D$.
\begin{proposition}
	Let $p>1$, $s\in(0,1)$. Assume that the open set $D\subset \mR^d$ admits the following Hardy's inequality for $u\in (C^1_c(D))^d$:
	\begin{equation*}
	\int_D \frac{|u(x)|^p}{d(x,D^c)^{sp}}\, dx \lesssim |u|_{\ssp}^p.
	\end{equation*}
	 Then the Korn's inequality holds for $D$, that is, there exists $C\geq 1$ such that for $u\in (C^1_c(D))^d$,
	 \begin{equation*}
	  C|u|_{\ssp}\geq |u|_{\wsp} .
	 \end{equation*}
\end{proposition}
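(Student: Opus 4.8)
The plan is to extend each $u\in (C^1_c(D))^d$ by zero to all of $\mR^d$ — call the extension $\tv$ — and then to compare the seminorms on $\mR^d$ with those on $D$, the discrepancy being controlled precisely by the Hardy-type term. First I would write, for the full seminorm on $\mR^d$,
\begin{equation*}
|\tv|_{W^{s,p}(\mR^d)}^p = |u|_{\wsp}^p + 2\int_D |u(x)|^p \int_{D^c} \frac{1}{|x-y|^{d+sp}}\, dy\, dx,
\end{equation*}
and similarly for the projected seminorm,
\begin{equation*}
|\tv|_{\mathcal{X}^{s,p}(\mR^d)}^p = |u|_{\ssp}^p + 2\int_D |u(x)|^p \int_{D^c} \frac{|(x-y)/|x-y||^p}{|x-y|^{d+sp}}\, dy\, dx = |u|_{\ssp}^p + 2\int_D |u(x)|^p \int_{D^c} \frac{1}{|x-y|^{d+sp}}\, dy\, dx.
\end{equation*}
The key observation — and this is why the projected seminorm behaves like the full one for the zero extension — is that the direction factor $|(x-y)/|x-y||$ equals $1$, so the ``boundary tail'' contributions are literally identical in both identities. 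Next I would estimate the inner integral $\int_{D^c} |x-y|^{-d-sp}\, dy$: for $x\in D$ it is comparable to $d(x,D^c)^{-sp}$ (up to a dimensional constant), because the region of integration is the complement of $D$, which lies at distance $d(x,D^c)$ from $x$; integrating in spherical coordinates from that distance outward gives the bound, and a matching lower bound holds too since $D^c$ contains at least a half-space's worth of mass near its closest point, or more simply one only needs the upper bound here.

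With these ingredients the argument closes quickly. From the Hardy hypothesis,
\begin{equation*}
\int_D |u(x)|^p \int_{D^c} \frac{1}{|x-y|^{d+sp}}\, dy\, dx \lesssim \int_D \frac{|u(x)|^p}{d(x,D^c)^{sp}}\, dx \lesssim |u|_{\ssp}^p,
\end{equation*}
so the boundary tail is dominated by $|u|_{\ssp}^p$. Hence
\begin{equation*}
|u|_{\wsp}^p \leq |\tv|_{W^{s,p}(\mR^d)}^p = |\tv|_{\mathcal{X}^{s,p}(\mR^d)}^p \leq C_K |\tv|_{W^{s,p}(\mR^d)}^p \,?
\end{equation*}
— more carefully: apply the whole-space Korn's inequality (\cite[Theorem~1.1]{MR3959431}) to $\tv\in W_0^{s,p}(\mR^d)$, which gives $|\tv|_{W^{s,p}(\mR^d)}^p \leq C_K |\tv|_{\mathcal{X}^{s,p}(\mR^d)}^p$, and then use the second identity above together with the Hardy bound to write $|\tv|_{\mathcal{X}^{s,p}(\mR^d)}^p \lesssim |u|_{\ssp}^p$. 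Combining with $|u|_{\wsp}^p \leq |\tv|_{W^{s,p}(\mR^d)}^p$ yields $|u|_{\wsp}^p \lesssim |u|_{\ssp}^p$, which is the assertion.

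The main obstacle — really the only nontrivial point — is justifying that $\tv$ lies in $W_0^{s,p}(\mR^d)$ (equivalently $W^{s,p}(\mR^d)$) so that the whole-space Korn's inequality is applicable: this requires knowing that the boundary tail $\int_D |u(x)|^p \int_{D^c} |x-y|^{-d-sp}\, dy\, dx$ is finite, which is exactly what the Hardy hypothesis delivers, provided we first establish the pointwise comparison $\int_{D^c} |x-y|^{-d-sp}\, dy \lesssim d(x,D^c)^{-sp}$. One should also check that $\tv$ can be approximated in $W^{s,p}(\mR^d)$ by $C^1_c(\mR^d)$ functions so that ``$W_0$'' is legitimate, or simply invoke that $\cite[Theorem~1.1]{MR3959431}$ holds for all of $W^{s,p}(\mR^d)$ with compact support; either way this is routine. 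A minor technical caveat worth noting is that one uses $sp \neq 1$ implicitly only through the Hardy hypothesis (which is assumed, not proved, here), so no extra case distinction is needed in this proposition itself.
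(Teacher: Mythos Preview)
Your approach is essentially identical to the paper's: extend by zero, apply the whole-space Korn inequality to $\tv$, bound the cross term $\int_D\int_{D^c}|x-y|^{-d-sp}\,dy\,dx$ by $\int_D |u(x)|^p d(x,D^c)^{-sp}\,dx$ via polar coordinates, and finish with the Hardy hypothesis.

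One small slip worth correcting: in your identity for $|\tv|_{\mathcal{X}^{s,p}(\mR^d)}^p$ the cross integrand is $\big|u(x)\cdot\tfrac{x-y}{|x-y|}\big|^p$, not $|u(x)|^p\,\big|\tfrac{x-y}{|x-y|}\big|^p$, so the displayed equality is really an inequality $\leq$ (Cauchy--Schwarz); this is harmless since $\leq$ is the direction you need, and indeed the paper writes it as an inequality. Also, the worry about $\tv\in W_0^{s,p}(\mR^d)$ is moot: since $u\in (C^1_c(D))^d$ vanishes near $\partial D$, the zero extension $\tv$ is itself in $(C^1_c(\mR^d))^d$.
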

\begin{proof}
	Let $u\in (C^1_c(D))^d$ and let $\wtu$ be the vector field $u$ extended to the whole of $\mR^d$ by $0$. First, by the Korn's inequality for the whole space \cite[Theorem 1.1]{MR3959431} we obtain
	\begin{equation*}\label{eq:fullkorn}
	|u|_{\wsp}\leq |\wtu|_{W^{s,p}(\mR^d)} \lesssim |\wtu|_{\mathcal{X}^{s,p}(\mR^d)}.
	\end{equation*}
	We estimate the right-hand side as follows:
	\begin{align*}\label{eq:ssprd}
	|\wtu|_{\mathcal{X}^{s,p}(\mR^d)}^p \leq |u|_{\ssp}^p + 2\int_D|u(x)|^p\int_{D^c}\frac{dy}{|x-y|^{d+sp}}\, dx.
	\end{align*}
	By using the polar coordinates we see that for every $x\in D$, $$\int_{D^c} |x-y|^{-d-sp}\, dy \leq \int_{B(0,d(x,D^c))^c} |y|^{-d-sp}\, dy \lesssim d(x,D^c)^{-sp}.$$
	Therefore, by the Hardy's inequality we get
	$$\int_D|u(x)|^p\int_{D^c}\frac{dy}{|x-y|^{d+sp}}\, dx \lesssim \int_D \frac{|u(x)|^p}{d(x,D^c)^{sp}}\, dx \lesssim |u|_{\ssp}^p,$$
	which ends the proof.
\end{proof}
\begin{remark}
 Thanks to the above result, we can significantly simplify the proof of the Korn's inequality for the half-space by Mengesha by omitting the discussion of the extension operator \cite[Section 4.1]{MR4017780}. If we had at our disposal the Hardy's inequality for the sets discussed in Theorem \ref{th:main}, we would obtain a slightly stronger inequality: $|u|_{\wsp} \lesssim |u|_{\ssp}$, that is, the estimate without the $L^p$ norm of $u$ on the right-hand side.
\end{remark}

\bibliographystyle{abbrv}
\bibliography{Korn}

\end{document}